\newtheorem{theorem}{Theorem}[section]
\newtheorem{lemma}[theorem]{Lemma}
\newtheorem*{theorem a}{Theorem A}
\newtheorem*{conjecture1}{Rigidity Conjecture}
\begin{document}
\setcounter{page}{1}
\title{Horizontal diameter of unit spheres with polar foliations and infinitesimally polar actions}
\author{Yi Shi}
\date{}
\protect
\maketitle ~~~\\[-5mm]

\protect\footnotetext{\!\!\!\!\!\!\!\!\!\!\!\!\! {\bf MSC 2010:}
53C12, 53C20.
\\
{\bf ~~Key Words:} singular Riemannian foliations, polar foliations, infinitesimally polar actions, unit sphere, isoparametric submanifold.}
\maketitle ~~~\\[-5mm]
{\footnotesize School of Mathematics and Statistics, Guizhou University of Finance and Economics, Guiyang 550025, China, e-mail: shiyi\underline{\hbox to 0.2cm{}}math@163.com}\\[1mm]

{\bf Abstract:} \noindent For a singular Riemannian foliation $\mathcal{F}$ on a Riemannian manifold, a curve is called horizontal if it meets the leaves of $\mathcal{F}$ perpendicularly. For a singular Riemannian foliation $\mathcal{F}$ on a unit sphere $\mathbb{S}^{n}$, we show that if $\mathcal{F}$ is a polar foliation or if $\mathcal{F}$ is given by the orbits of an infinitesimally polar action, then the horizontal diameter of $\mathbb{S}^{n}$ is $\pi$, i.e., any two points in $\mathbb{S}^{n}$ can be connected by a horizontal curve of length $\leq\pi$.
\markright{\sl\hfill \hfill}\\

\section{Introduction}

\renewcommand{\thesection}{\arabic{section}}
\renewcommand{\theequation}{\thesection.\arabic{equation}}
\setcounter{equation}{0}

A \emph{singular Riemannian foliation} $\mathcal{F}$ on a Riemannian manifold $M$ is a decomposition of $M$ into smooth injectively immersed submanfolds $L(x)$, called leaves, such that it is a singular foliation and any geodesic starting orthogonally to a leaf remains orthogonal to all leaves it intersects. A leaf $L$ of $\mathcal{F}$ (and each point in $L$) is called \emph{regular} if the dimension of $L$ is maximal, otherwise $L$ is called \emph{singular}; see \cite{wa,mol,ra3}.

A singular Riemannian foliation is called a \emph{polar foliation} if, for each regular point $p$, there is a totally geodesic complete immersed submanifold $\Sigma_{p}$, called \emph{section}, that passes through $p$ and that meets each leaf orthogonally. A typical example of a polar foliation is the partition of a Riemannian manifold into parallel submanifolds to an isoparametric submanifold $L$ in a Euclidean space. Recall that a submanifold $L$ of a Euclidean space is called \emph{isoparametric} if its normal bundle is flat and the principal curvatures along any parallel normal vector field are constant; see \cite{al1,al2,at,hpt,te,th1,th2}.

For a singular Riemannian foliation $\mathcal{F}$ on a Riemannian manifold $M$, a curve is called \emph{horizontal} if it meets the leaves of $\mathcal {F}$ perpendicularly. When $M$ has positive curvature, Wilking \cite{wi} proved that any two points in $M$ can be connected by a piecewise smooth horizontal curve. Thus Wilking introduced the \emph{horizontal metric} $g_{\mathcal{H}}$ on $M$ by defining the \emph{horizontal distance} of two points as the infimum over the length of all horizontal curves connecting these two points. Besides the intrinsic interest in such object, one reason to study $g_{\mathcal{H}}$ is its connection with sub-Riemannian geometry; see \cite{mon,sx}. It is natural to define the \emph{horizontal diameter} $diam_{\mathcal{H}}M$ of $M$ by $$diam_{\mathcal{H}}M=\sup\{d_{\mathcal{H}}(p, q)\mid\ p,q\in M\},$$ where $d_{\mathcal{H}}(p, q)$ is the horizontal distance of $p$ and $q$. Notice that $diam_{\mathcal{H}}M \geq diam(M)$, where $diam(M)$ is the diameter of $M$ defined by its Riemannian metric.

Recently a lot of progress has been made in the singular Riemannian foliations of round spheres(\cite{go,gl1,gl2,gl3,gr,lr,lw,mr,ra1,ra2}). In \cite{sx} we studied the horizontal diameter rigidity of unit sphere $\mathbb{S}^{n}$. We proved that for many classes of singular Riemannian foliations on $\mathbb{S}^{n}$, the horizontal diameter of $\mathbb{S}^{n}$ is $\pi$, i.e., any two points in $\mathbb{S}^{n}$ can be connected by a horizontal curve of length $\leq\pi$. Based on our results, we also proposed the following rigidity conjecture:
\begin{conjecture1}
For any singular Riemannian foliation on a unit sphere $\mathbb{S}^{n}$, we have $diam_{\mathcal{H}}\mathbb{S}^{n}=\pi$.
\end{conjecture1}

We call this conjecture ``rigidity" since it asserts that the inequality $diam_{\mathcal{H}}\mathbb{S}^{n} \geq diam(\mathbb{S}^{n})=\pi$ should be an equality. In this paper we prove this conjecture for the case of polar foliation:

\begin{theorem}\label{1.1}
For any polar foliation on a unit sphere $\mathbb{S}^{n}$, we have $diam_{\mathcal{H}}\mathbb{S}^{n}=\pi$.
\end{theorem}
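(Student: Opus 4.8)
Since $\operatorname{diam}_{\mathcal H}\mathbb{S}^n\ge\operatorname{diam}\mathbb{S}^n=\pi$ holds automatically, only the inequality $\operatorname{diam}_{\mathcal H}\mathbb{S}^n\le\pi$ needs proof: I must join any two points $p,q\in\mathbb{S}^n$ by a horizontal curve of length $\le\pi$, and by approximation it suffices to take $p,q$ regular. The plan rests on the standard structure of a polar foliation $\mathcal F$ on $\mathbb{S}^n$ (the regular leaves being isoparametric submanifolds). Fixing a regular point, its section $\Sigma$ is totally geodesic and complete, hence a great subsphere $\mathbb{S}^k$ with $k=\operatorname{codim}\mathcal F$; every leaf meets $\Sigma$ in an orbit of the generalized Weyl group $W$, a finite group which in this setting is generated by reflections; and the leaf space is the spherical Coxeter orbifold $X=\mathbb{S}^n/\mathcal F=\mathbb{S}^k/W$, an Alexandrov space of curvature $\ge1$ with $\operatorname{diam}X\le\pi$ whose underlying set is a Coxeter chamber in $\mathbb{S}^k$. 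Two elementary remarks drive everything: (a) any curve contained in a section is horizontal, because the section meets every leaf orthogonally, so a minimizing geodesic inside $\Sigma$ between two of its points is horizontal and has length $\le\operatorname{diam}\mathbb{S}^k=\pi$; (b) because sections are totally geodesic, a horizontal geodesic through a regular point stays inside the unique section through it. Hence every horizontal curve is, up to reparametrization, a concatenation of geodesic arcs lying in sections, the section being allowed to change only where the curve meets a singular leaf. So it is enough to build, for given regular $p,q$, a chain of sections $\Sigma_0,\dots,\Sigma_m$ with $p\in\Sigma_0$, $q\in\Sigma_m$ and points $z_i\in\Sigma_{i-1}\cap\Sigma_i$ lying in singular leaves, so that the total length $\sum_i d_{\Sigma_i}(z_i,z_{i+1})$ (with $z_0=p$, $z_{m+1}=q$) is $\le\pi$.

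I would organize the construction through the quotient map $\rho\colon\mathbb{S}^n\to X$, a submetry whose restriction to each section is the orbit map $\mathbb{S}^k\to\mathbb{S}^k/W$. A chain as above projects to a path in $X$ from $\rho(p)$ to $\rho(q)$ of the same total length, and conversely the chain construction lifts any concatenation of geodesic arcs in $X$ back to a horizontal curve, section by section. The essential feature of this lift is endpoint control: at the final step one uses that $W$ acts transitively on the finite set $L(q)\cap\Sigma_q$, and chooses $\Sigma_m=\Sigma_q$ and the last arc so that the lift terminates at $q$ itself and not merely somewhere on the leaf $L(q)$. The task is thereby reduced to producing a path in $X$ of length $\le\pi$ from $\rho(p)$ to $\rho(q)$ that unfolds to an admissible chain.

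For the length bound I would run a symmetry/doubling argument, modeled on the elementary examples: the latitude foliation of $\mathbb{S}^2$, where one routes through the North or the South pole, and the maximal-torus foliation of $\mathbb{S}^3$, where one routes first toward one of the two singular cores or toward the other. In general one exhibits two complementary routings of the chain --- roughly, send $p$ first toward one face of the Coxeter chamber $X$ and $q$ toward the complementary face, or interchange the roles. The transfer arcs joining consecutive singular leaves lift to geodesic arcs of $\mathbb{S}^k$ and so have length $\le\pi$, while the distances from $\rho(p)$ and from $\rho(q)$ to complementary faces of the spherical chamber add up to a controlled amount; pairing these contributions shows that the two routings have lengths summing to $\le2\pi$, so the shorter one has length $\le\pi$. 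For the homogeneous case (polar foliations coming from polar representations, via Dadok's classification) one may alternatively invoke the results of \cite{sx}; the new input here is the reduction to $\mathbb{S}^k/W$ together with the uniform chain-plus-doubling argument, which in particular handles the inhomogeneous isoparametric hypersurface foliations.

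The step I expect to be the main obstacle is reconciling the two demands on the chain: forcing the horizontal lift to end exactly at $q$ (which forces the curve through singular leaves, hence lengthens it relative to a naive geodesic of $X$), while simultaneously keeping the number $m$ of sections and the individual arc lengths small enough that the telescoped total does not exceed $\pi$. Controlling both at once is exactly where the reflection-group combinatorics of $W$ on $\mathbb{S}^k$ and the complementary-routing trick have to be used carefully; the positive curvature of the sphere enters the argument only through $\operatorname{diam}\mathbb{S}^k=\pi$ and, via Wilking's theorem \cite{wi}, through the sheer abundance of horizontal curves.
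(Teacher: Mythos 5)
Your proposal takes a genuinely different route from the paper's, and the route as sketched has a real gap precisely at the point you yourself flag as the ``main obstacle.''

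The paper does not attempt to steer a single lifted chain so that it lands exactly on $q$. Instead it reverses the problem: it fixes a section $\Sigma$ through $p$, picks $q_{0}\in L(q)\cap\Sigma$ \emph{farthest} from $p$, takes the minimal horizontal geodesic $\gamma$ from $q_{0}$ to $p$ of length $\theta_{0}\le\pi$, and then considers the family $\mathcal B$ of \emph{all} broken horizontal geodesics from $p$ that have the same projection to $X$ as $\gamma$ and are allowed to change direction at each singular leaf. The crucial topological input is that passing the $i$-th focal radius $\theta_{i}$ makes the focal map $f_{\theta_{i}}$ collapse the curvature sphere $S_{i}(x)$ to a point, so going backwards the family $\mathcal B(\theta)$ sweeps out an entire curvature sphere at each focal crossing; after $g$ crossings one gets an iterated sphere bundle $Z\to M$, the Bott--Samelson cycle, which represents the fundamental $\mathbb Z_{2}$-class of $M$ and is therefore \emph{onto} $M$. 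Hence $\mathcal B(0)=L(q_{0})=L(q)$ contains $q$, and $d_{\mathcal H}(p,q)\le\theta_{0}\le\pi$ with no steering needed. The surjectivity of $f\colon Z\to M$ (Theorem~4.20 of \cite{hpt}) is exactly the substitute for ``endpoint control.''

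In your sketch, by contrast, you assert that ``one uses that $W$ acts transitively on the finite set $L(q)\cap\Sigma_q$, and chooses $\Sigma_m=\Sigma_q$ and the last arc so that the lift terminates at $q$ itself.'' That is a claim, not a construction: once the path in $X$ and the initial point $p$ are fixed, the endpoint of the lifted chain is determined; the $W$-transitivity tells you $q$ lies in the orbit $L(q)\cap\Sigma_q$, but not that your particular lift hits it. Arranging for that is the entire difficulty, and to solve it one must show that a well-chosen wall-crossing sequence in the Coxeter chamber generates the whole orbit in $\Sigma_q$ --- which, once made precise, is the Bott--Samelson argument in disguise. Likewise, the ``doubling'' heuristic (two complementary routings summing to $\le2\pi$) is only verified for the rank-one example $\mathbb S^2$; for chambers of higher rank with several walls you would have to both define the ``complementary face'' and track arc lengths across several sections, and you give no argument that the telescoped sum closes at $\pi$. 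So while the reduction to $X=\mathbb S^{k}/W$ and the chain-of-sections picture are correct and compatible with the paper, the two load-bearing steps --- surjectivity onto $L(q)$ and the $\le\pi$ length bound --- are stated, not proved. Without an analogue of the Bott--Samelson cycle (or an explicit Coxeter-combinatorial substitute), the proof does not close.
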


Notice that Theorem 1.1 is not new. In fact, by \cite{th1} polar foliations of unit sphere $\mathbb{S}^{n}$ with codimension $\geq 2$ give rise to spherical buildings, so the horizontal diameter of $\mathbb{S}^{n}$ must be $\pi$. On the other hand, the case of codimension 1 had been proved by \cite{es}. However, in this paper we will give a short and uniform proof.

Recall that a \emph{Riemannian orbifold} is a metric space locally isometric to quotients of Riemannian manifolds by finite groups of isometries.
It has been shown by Lytchak and Thorbergsson \cite{lt} that for an isometric action of a compact Lie group $G$ on a Riemannian manifold $M$, the quotient $X=M/G$ is a Riemannian orbifold if and only if all slice representations of the action are polar. In \cite{lt} an isometric action of $G$ on $M$ with this property has been called \emph{infinitesimally polar}.

In \cite{gl2} Gorodski and Lytchak classified all representations of compact connected Lie groups $G$ on Euclidean spaces whose induced action on the unit sphere $\mathbb{S}^{n}$ has the orbit space $X=\mathbb{S}^{n}/G$ isometric to a Riemannian orbifold. Here we cite a slightly simplified version of their classification as follow:

\begin{theorem}\label{GL} (Gorodski and Lytchak \cite{gl2}) Let a compact connected Lie group $G$ act effectively and isometrically on the unit sphere $\mathbb{S}^{n}$. The quotient $X=\mathbb{S}^{n}/G$ is a Riemannian orbifold if and only if one of the following cases occurs:

\noindent(1) The action of $G$ is polar.

\noindent(2) $G$ has rank $1$ and acts almost freely on $\mathbb{S}^{n}$.

\noindent(3) $G$ has cohomogeneity $2$.

\noindent(4) $G$ is one of the actions listed in Table 1 of \cite{gl2}. In particularly, $diam X=\frac{\pi}{2}$, $X$ has constant curvature $4$ and the dimension $\dim X$ of $X$ satisfies $3\leq\dim X\leq5$.
\end{theorem}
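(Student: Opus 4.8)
The strategy is to reduce the classification to a problem in representation theory via the Lytchak--Thorbergsson criterion quoted above: $X=\mathbb{S}^n/G$ is a Riemannian orbifold if and only if every slice representation of the $G$-action on $\mathbb{S}^n$ is polar, i.e.\ the action is infinitesimally polar. The ``if'' direction is then quick. If $G$ acts polarly, $X$ is the quotient of a section by the (finite) generalized Weyl group, hence a Riemannian orbifold: case (1). If $G$ is connected and acts almost freely, then $G$ has rank $1$ (a positive-dimensional torus acts almost freely on a sphere only if it is one-dimensional, since the isotropy of a point on a weight line has codimension one in the torus), all isotropy groups are finite, so all slice representations are representations of finite groups and are trivially polar: case (2). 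Cohomogeneity $\le 1$ actions are polar, hence fall under (1); cohomogeneity $2$ actions are shown, by a direct analysis of the two-dimensional orbit space, to be infinitesimally polar: case (3). For the entries of Table~1 one checks directly that $X$ is a spherical orbifold $\mathbb{S}^{k}(\tfrac12)/\Gamma$ of the stated dimension, exploiting the special features of each representation (triality, exceptional isogenies, and the like); this also yields the final assertion about $\dim X$, the curvature and the diameter.

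For the ``only if'' direction, assume the action is infinitesimally polar but covered by none of (1)--(3); in particular its cohomogeneity is at least $3$ and it is not polar. The first step is to pass to a \emph{minimal reduction} of the action --- a subaction with the same orbit space, hence the same orbifold structure, whose principal isotropy is as small as possible --- and to measure how far the reduced action can be from polar. There are two local constraints. At each point $p$ the slice representation $(G_p,\nu_p)$ must be polar, so it appears on Dadok's list of polar representations; and the orbifold condition forces the local group of $X$ at $[p]$ to be finite, which translates into an arithmetic restriction on the weights of the $G$-representation transverse to each singular stratum.

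Propagating these constraints up the orbit-type stratification, and combining them with the classification of representations of small cohomogeneity and of those whose minimal reduction has small dimension, one shows that the non-polar infinitesimally polar representations of cohomogeneity $\ge 3$ form a finite list; removing from it the ones that are in fact polar or of cohomogeneity $2$ leaves exactly Table~1. For each remaining entry the orbit-space metric is then computed explicitly, and it is a space form of curvature $4$ --- equivalently $\mathrm{diam}\,X=\pi/2$ --- of dimension $3$, $4$ or $5$, which establishes the remaining claims.

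The main obstacle is the case analysis in the ``only if'' direction: there is no conceptual shortcut. One must run through the candidate representations of cohomogeneity $\ge 3$, use polarity of the slice representations as an inductive obstruction to eliminate most of them, and in the few surviving families determine the ambient representation precisely; the verification of the orbifold structure and of the curvature-$4$ statement for Table~1 then requires identifying each quotient with a concrete spherical orbifold. Reducing the problem to a finite check is the conceptual content of the argument; performing that finite check is the bulk of the work.
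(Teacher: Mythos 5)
The paper does not prove Theorem \ref{GL}: it is quoted directly (in a slightly simplified form) from Gorodski and Lytchak \cite{gl2} and is used as an external input in the proof of Theorem \ref{infinitesimally polar}. There is therefore no in-paper proof to compare your proposal against.

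Taken as a summary of the argument in \cite{gl2}, your sketch identifies the right ingredients: the Lytchak--Thorbergsson criterion (orbifold quotient $\Leftrightarrow$ infinitesimally polar), polarity of slice representations as a local constraint, passage to a minimal reduction, and a finite case analysis in cohomogeneity $\geq 3$ followed by an explicit identification of the Table~1 quotients as spherical orbifolds of curvature $4$. But as you yourself note, this is an outline rather than a proof: the decisive content --- eliminating all but finitely many candidate representations of cohomogeneity $\geq 3$ and then computing the orbit-space metric for each survivor --- is described but not carried out. For the purposes of the present paper this is immaterial, since Theorem \ref{GL} is treated as a black box; if you wanted to supply a genuine proof you would have to reproduce the representation-theoretic case analysis of \cite{gl2}, and your sketch does not yet do that.
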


Thus for $G$ in Theorem \ref{GL}, if $G$ has rank at least $2$ and the action of $G$ is not polar, then $2\leq\dim X\leq5$. As been pointed out in \cite{gl2}, this result is rather surprising if one compares it with the examples of Riemannian orbifolds arising as quotients of non-homogeneous singular Riemannian foliations on unit spheres. By using representations of Clifford algebras Radeschi \cite{ra2} showed that there are singular Riemannian foliations (the so-called \emph{Clifford foliations}) on unit spheres $\mathbb{S}^{n}$, for $n$ large enough, such that the quotient space is a Riemannian orbifold and isometric to a hemisphere of curvature $4$, which can be of arbitrary large dimension!

In \cite{sx} we proved the above rigidity conjecture of horizontal diameter for the class of Clifford foliations constructed by Radeschi \cite{ra2}. As an application of Theorem \ref{1.1}, the following result of this paper continues this investigation:
\begin{theorem}\label{infinitesimally polar} Let a compact connected Lie group $G$ act effectively and isometrically on the unit sphere $\mathbb{S}^{n}$ and $\mathcal{F}$ be the singular Riemannian foliation given by the orbits of the $G$ action. If the quotient $X=\mathbb{S}^{n}/G$ is a Riemannian orbifold (or equivalently the action of $G$ is infinitesimally polar), then $diam_{\mathcal{H}}\mathbb{S}^{n}=\pi$.
\end{theorem}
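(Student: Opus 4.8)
The plan is to run the Gorodski--Lytchak trichotomy of Theorem \ref{GL} and to dispose of its four cases in turn, using the already established polar case. In Case (1) the orbit decomposition $\mathcal{F}$ is itself a polar foliation of $\mathbb{S}^{n}$, so $diam_{\mathcal{H}}\mathbb{S}^{n}=\pi$ is precisely Theorem \ref{1.1} and nothing further is needed. The work is therefore concentrated in Cases (2), (3), (4), where $\mathcal{F}$ may fail to be polar but the quotient $X=\mathbb{S}^{n}/G$ is, by hypothesis, a Riemannian orbifold.

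The main device for those cases is the comparison between horizontal curves on $\mathbb{S}^{n}$ and curves on $X$. Over the regular stratum the projection $\pi\colon\mathbb{S}^{n}\to X$ is a Riemannian orbifold submersion, so a horizontal curve projects to a curve of equal length, and conversely a rectifiable curve on $X$ admits a horizontal lift of equal length (a general property of singular Riemannian foliations, whose quotient maps are submetries). Consequently, for any point $p$ and any leaf $L$,
\[
d_{\mathcal{H}}(p,L)=d_{X}\bigl(\pi(p),\pi(L)\bigr)\le diam(X)\le diam(\mathbb{S}^{n})=\pi .
\]
Thus from $p$ one already reaches the \emph{leaf} $L(q)$ by a horizontal curve of length $\le\pi$; what remains is to reach the \emph{point} $q$ itself without exceeding total length $\pi$. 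Since $G$ acts by isometries preserving $\mathcal{F}$, and hence the horizontal distribution, this amounts to controlling, inside a leaf, which points can be joined to a fixed one by short horizontal curves --- a holonomy problem governed by the orbifold fundamental group $\pi_{1}^{\mathrm{orb}}(X)$ and by the slice representations, all of which are polar by infinitesimal polarity.

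I would then exploit the special geometry supplied by Theorem \ref{GL}. In Case (4) the list is finite and $X$ has constant curvature $4$, diameter $\pi/2$ and $3\le\dim X\le5$; this is the situation occurring for the Clifford foliations of Radeschi, whose horizontal diameter rigidity was proved in \cite{sx}, and the same scheme applies --- reach $L(q)$ from $p$ with length $\le diam(X)=\pi/2$, then move along $L(q)$ to $q$ by lifting a loop on $X$ of controlled length, keeping the total $\le\pi$. In Case (3) the orbit space is a $2$-dimensional Riemannian orbifold of curvature $\ge1$, and in Case (2) the action is almost free, so $\mathcal{F}$ is a regular Riemannian foliation of $\mathbb{S}^{n}$ with compact leaves of dimension $1$ or $3$ and $\pi$ is an orbifold fibre bundle; in both instances $\mathcal{F}$ falls under one of the classes of singular Riemannian foliations of $\mathbb{S}^{n}$ for which rigidity was already established in \cite{sx}, or else is settled by the holonomy argument above together with the bound $diam(X)\le\pi/2$, which I expect to hold for every orbifold quotient of a non-polar action.

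The crux, in each non-polar case, will be the step from ``$p$ is horizontally within distance $\pi$ of the leaf $L(q)$'' to ``$p$ is horizontally within distance $\pi$ of $q$'': one must show that the set of endpoints in $L(q)$ of horizontal curves of length $\le\pi$ issuing from $p$ exhausts $L(q)$. The orbifold hypothesis is what makes this feasible --- it renders the relevant holonomy discrete and of bounded ``size'' --- and the bound $diam(X)\le\pi/2$ from Theorem \ref{GL} is what leaves enough room in the length budget; everything else reduces to bookkeeping with horizontal lifts, Theorem \ref{1.1}, and the Clifford-foliation analysis of \cite{sx}.
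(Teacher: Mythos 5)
Your Case (1) handling matches the paper: it is exactly Theorem~\ref{1.1}. For Cases (2)--(4), however, what you have written is a sketch with a genuine gap at precisely the step you yourself call ``the crux.'' You correctly observe that the submetry $\pi\colon\mathbb{S}^{n}\to X$ gives $d_{\mathcal{H}}(p,L(q))\le diam(X)\le\pi$, but then need to pass from reaching the leaf $L(q)$ to reaching the point $q$ with total length $\le\pi$, and the ``holonomy argument governed by $\pi_{1}^{\mathrm{orb}}(X)$'' you invoke is never carried out --- it is a placeholder, not a proof. You also hang the length budget on the claim $diam(X)\le\pi/2$ for every non-polar orbifold quotient, which you admit you merely ``expect to hold''; for Case (3) (cohomogeneity $2$) this is not given to you by Theorem~\ref{GL} and would itself need a proof. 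Finally, the appeal to the Clifford-foliation computation of \cite{sx} for Case (4) is not a direct substitution: the actions in Table~1 of \cite{gl2} are homogeneous and are not literally the Clifford foliations, so the rigidity result you cite does not apply to them as stated.

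The paper's actual route around these difficulties is different and dispenses with the ``reach the leaf, then move within the leaf'' bookkeeping entirely. It shows that every case reduces to one of three previously settled situations: polar (Theorem~\ref{1.1}), Riemannian foliation, or \emph{reducible} action (the latter two handled in \cite{sx}). Concretely: Case (2) is a Riemannian foliation since the action is almost free; Case (3) is handled by Theorem~\ref{cohom 2}, which says a cohomogeneity-$\le 2$ action is polar or reducible; and Case (4) is handled by Lemma~\ref{reducible}, which converts the information $diam\,X=\pi/2$ from Theorem~\ref{GL} into the statement that the action is reducible. Your proposal uses neither Theorem~\ref{cohom 2} nor Lemma~\ref{reducible}, nor the notion of reducibility at all, which is the organizing idea of the paper's argument. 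To repair your approach you would have to actually prove the holonomy claim (that horizontal curves of length $\le\pi$ from $p$ sweep out all of $L(q)$), which in the polar case required the Bott--Samelson cycle machinery of Section~2; no comparable mechanism is supplied for the non-polar cases.
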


To prove Theorem \ref{infinitesimally polar}, we will use Straume's results \cite{st} on representations of compact Lie groups of cohomogeneity $\leq3$. We show that these results can be stated in the following form, which is applicable in the proof of Theorem \ref{infinitesimally polar}:

\begin{theorem}\label{cohom 2}
Let a compact connected Lie group $G$ act effectively and isometrically on the unit sphere $\mathbb{S}^{n}$. If $G$ has cohomogeneity $\leq 2$, then the action of $G$ is either polar or reducible.
\end{theorem}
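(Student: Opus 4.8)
The plan is to reduce the theorem to a finite check against the known classifications of low-cohomogeneity representations. First I would write $\rho\colon G\to SO(n+1)$ for the linear representation underlying the action (the image is connected, hence lies in $SO(n+1)$). Since the $G$-orbits in $\R^{n+1}$ lie in concentric spheres, the quotient $\R^{n+1}/G$ is the Euclidean cone over $\mathbb{S}^{n}/G$, so the cohomogeneity of $\rho$ on $\R^{n+1}$ is one more than that of $G$ on $\mathbb{S}^{n}$, and in particular is $\leq 3$. Two cases are then disposed of for free: if $G$ acts on $\mathbb{S}^{n}$ with cohomogeneity $0$ or $1$ the action is automatically polar — a complete geodesic meeting a principal orbit orthogonally is a section (and a point is a section in the transitive case) — and if $\rho$ is reducible there is nothing to prove. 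Hence it remains to treat the case where $\rho$ is \emph{irreducible} and $G$ acts on $\mathbb{S}^{n}$ with cohomogeneity exactly $2$, i.e.\ $\rho$ has cohomogeneity exactly $3$ on $\R^{n+1}$.

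For this remaining case I would appeal to Straume's classification \cite{st}: up to equivalence there is only a short explicit list of irreducible orthogonal representations of compact connected Lie groups of cohomogeneity $\leq 3$, tabulated by the group, the representation, and the relevant numerical invariants. The plan is then to go through this list entry by entry and verify that each representation is polar. The most efficient bookkeeping device is Dadok's classification of polar representations: an irreducible representation is polar exactly when it is orbit-equivalent to the isotropy representation of a Riemannian symmetric space (an $s$-representation), and the $s$-representations are themselves explicitly tabulated, each carrying an obvious section — a maximal abelian subspace. Matching Straume's list against this tabulation (or, equivalently, reading off directly from Straume's tables, which already record when an entry is of this type) shows that every irreducible representation of cohomogeneity $\leq 3$ is polar; this is precisely the reformulation of Straume's results announced just before the statement, and it completes the proof.

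The reduction in the first paragraph costs nothing, so the real content — and the step I expect to be the main obstacle — is the case analysis in the second paragraph. The points requiring care are: keeping track of the normalizations in \cite{st} (connectedness and effectiveness of $G$, and the bookkeeping shift between cohomogeneity on $\mathbb{S}^{n}$ and on $\R^{n+1}$); recognizing a representation as polar even when it is only \emph{orbit}-equivalent, rather than literally equal, to an $s$-representation; and checking that low-rank coincidences among the classical groups do not slip a genuinely non-polar entry past the comparison. Once every irreducible representation on the list has been identified with a polar model, the dichotomy ``polar or reducible'' is established.
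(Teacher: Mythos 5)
Your overall strategy is sound and would likely yield a correct proof, but it is genuinely different from the one in the paper, and the crucial step is left as an unexecuted plan rather than an argument.

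Your route: reduce to the case of an irreducible representation of cohomogeneity exactly $3$ on $\R^{n+1}$, then go through Straume's tables of irreducible cohomogeneity-$\le 3$ representations and match each entry against Dadok's list of $s$-representations to conclude polarity. This is logically equivalent to the theorem (given the easy reductions in your first paragraph), so if the finite check comes out as you expect, you are done. But the check is precisely where the content lies, and you have not actually carried it out; the claim that ``every irreducible representation of cohomogeneity $\le 3$ is polar'' is asserted, not verified, and the remark that ``this is precisely the reformulation of Straume's results announced just before the statement'' is circular --- the announcement before the statement \emph{is} Theorem \ref{cohom 2} itself, not an already-established fact.

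The paper avoids the table check entirely. It handles cohomogeneity $1$ trivially (any normal geodesic is a section), disposes of cohomogeneity $2$ with $\dim G = 1$ by noting $G$ is then abelian, hence the representation is reducible, and for the remaining case (cohomogeneity $2$, $\dim G>1$, non-polar) it invokes a single structural statement --- Straume's Theorem 5.1, which says the orbit space is isometric to $\mathbb{S}^3/K$ for some $1$-dimensional linear group $K$ --- and then applies Lemma \ref{reducible} twice: $K$ is abelian hence reducible, so $\mathrm{diam}(\mathbb{S}^3/K)\ge\pi/2$; since $\mathrm{diam}(\mathbb{S}^n/G)=\mathrm{diam}(\mathbb{S}^3/K)\ge\pi/2$, the same lemma forces the $G$-action to be reducible. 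What the paper's approach buys is that the conclusion ``reducible'' is extracted directly from a diameter bound on the orbit space, rather than by ruling out non-polar irreducibles one family at a time; it also makes the role of Lemma \ref{reducible} explicit, which is the actual new ingredient of this paper. Your approach, if completed, would yield the slightly stronger and more self-contained statement that irreducible cohomogeneity-$2$ sphere actions are polar, but at the cost of a case-by-case verification against classification tables, with the attendant pitfalls you yourself flag (orbit-equivalence vs.\ equality, low-rank coincidences, normalization conventions).
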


In Section 2 we recall basics about polar foliations on $\mathbb{R}^{n+1}$ and $\mathbb{S}^{n}$, then we prove Theorem 1.1. In Section 3 we prove the Lemma \ref{reducible} concerned with reducible actions on unit sphere, then we use it to prove Theorem \ref{cohom 2} and \ref{infinitesimally polar}.\\

\noindent$\mathbf{Acknowledgement.}$ I owe especial thanks to Marco Radeschi for many discussions that have been very helpful in clarifying my ideas. I am very grateful to Gudlaugur Thorbergsson for many discussions on Bott-Samelson cycle. I would like to thank my advisor Fuquan Fang for his supports and valuable discussions. Part of this paper was completed during a visit at the Capital Normal University, I am very grateful to Zhenlei Zhang for the invitation to Capital Normal University. For helpful discussions and comments I would like to thank Chao Qian and Yueshan Xiong. The author is supported by National Natural Science Foundation of China No.11801337 and the Introducing Talents Project in Guizhou University of Finance and Economics (2019YJ042).

I would like to thank the referee for the careful review and critical comments that helped to clarify some issues and simplify the proof of Theorem 1.1 considerably.

\section{Polar foliations on $\mathbb{R}^{n+1}$ and $\mathbb{S}^{n}$}
\renewcommand{\thesection}{\arabic{section}}
\renewcommand{\theequation}{\thesection.\arabic{equation}}
\setcounter{equation}{0}
\setcounter{theorem}{0}

Let $\mathcal{F}$ be a polar foliation of a Euclidean space $\mathbb{R}^{n+1}$ or of a unit sphere $\mathbb{S}^{n}$. It was observed by Alexandrino in \cite{al1} that $\mathcal{F}$ is an isoparametric foliation in these spaces. Conversely, it is clear from Terng's paper \cite{te} that isoparametric foliations are polar (see \cite{te} for the definition of isoparametric foliation). Below we state certain properties of isoparametric foliations which are proved in \cite{te}, see also \cite{hpt,th1,th2}.

Suppose $\mathcal{F}$ is an isoparametric foliation of $\mathbb{R}^{n+1}$ or of $\mathbb{S}^{n}$. Then $\mathcal{F}$ have closed leaves and there are no exceptional leaves, i.e., that all regular leaves have trivial holonomy. A regular leaf $M$ of $\mathcal{F}$ is an isoparametric submanifold in $\mathbb{R}^{n+1}$ or in $\mathbb{S}^{n}$. This means $M$ has flat normal bundle $VM$ and the principal curvatures in the direction of any parallel normal vector field are constant. The leaves are in one-to-one correspondence with the images of the endpoint maps of the parallel normal vector fields over $M$, so all the leaves are parallel manifolds of $M$.

Let $\mathcal{F}$ be an isoparametric foliation of $\mathbb{S}^{n}$. Then the foliation of $\mathbb{R}^{n+1}$ whose leaves are homothetic to leaves of $\mathcal{F}$ is also isoparametric. Conversely, assume that $\mathcal{F}$ is an isoparametric foliation of $\mathbb{R}^{n+1}$ with compact leaves. Then there is a focal submanifold that coincides with a point. We can assume that this point is the origin $o$. Then the leaves meeting $\mathbb{S}^{n}$ lie in $\mathbb{S}^{n}$ and induce an isoparametric foliation there. In this paper, we only consider isoparametric foliations of $\mathbb{R}^{n+1}$ with compact leaves.

Let $\mathcal{\overline{F}}$ be a polar or isoparametric foliation of $\mathbb{R}^{n+1}$ whose leaves are compact with codimension $\geq2$. A regular leaf $M$ of $\mathcal{\overline{F}}$ is a compact isoparametric submanifold in $\mathbb{R}^{n+1}$. Since a compact isoparametric submanifold is included in a round sphere, without loss of generality we can assume that it lies in the unit sphere $\mathbb{S}^{n}$ of $\mathbb{R}^{n+1}$ with center at the origin $o$. We will consider the normal spaces of $M$ to be affine subspaces of $\mathbb{R}^{n+1}$ that we denote by $x + V_{x}M$ for $x \in M$. Every $x + V_{x}M$ is a linear subspace of $\mathbb{R}^{n+1}$ since $M\subset\mathbb{S}^{n}$. Notice that $x + V_{x}M$ is the section passing $x$ when we view $\mathcal{\overline{F}}$ as a polar foliation of $\mathbb{R}^{n+1}$.

The tangent bundle $TM$ has a canonical splitting as the orthogonal direct sum of $g$ subbundles $E_{i}$, each of which is integrable, say of fiber dimension $m_{i}$, and has associated to it a canonical parallel normal field $v_{i}$ (the ith \emph{curvature normal vector}). These are related and characterized by the following fact: the set $\{A_{v}\}$ of shape operators, $v\in V_{x}M$, is a commuting family of self-adjoint operators on $T_{x}M$ and the $E_{i}(x)$ are the common eigenspaces with corresponding eigenvalues $\langle v, v_{i}(x)\rangle$. These $E_{i}$'s are called the \emph{curvature distributions} of $M$ and the $m_{i}$'s are called the \emph{multiplicities} of M. The leaf $S_{i}(x)$ of the integrable subbundle $E_{i}$ through $x\in M$ is a standard $m_{i}$ dimensional sphere in $\mathbb{R}^{n+1}$ and the center of $S_{i}(x)$ is $x + (v_{i}(x)/\langle v_{i}, v_{i}\rangle)$.

Now fix a point $x\in M$. Then the focal set of $M$, i.e., the set of critical values of the exponential map of $\mathbb{R}^{n+1}$ restricted to $VM$, intersected with the normal space $x + V_{x}M$, is the union over the $g$ linear hyperplanes $l_{i}(x):= v_{i}(x)^{\perp}$ in $x + V_{x}M $. In fact $(x + v_{0})\in l_{i}(x)$ if and only if $\langle v_{0}, v_{i}(x)\rangle = 1$. It turns out that the reflection $R_{i}^{x}$ of $x + V_{x}M$ in $l_{i}(x)$ permutes the hyperplanes $l_{1}(x),\ldots , l_{g}(x)$. Thus the group $W^{x}$ generated by the reflections $R_{1}^{x},\ldots, R^{x}_{g}$ is a finite Coxeter group. Since the holonomy of the normal bundle $VM$ is trivial, for each $x, y$ in $M$ we have a canonical parallel translation map $\pi_{x,y}:V_{x}M\rightarrow V_{y}M$. Because $\pi_{x,y}:V_{x}M\rightarrow V_{y}M$ conjugates $W^{x}$ to $W^{y}$, we have a well-defined Coxeter group $W$ associated to $M$. We call $W$ the \emph{Coxeter group} of the isoparametric submanifold $M$.

Let $U$ be a connected component of the complement of the union of the hyperplanes $l_{i}(x)$ in $x + V_{x}M$. Then its closure $\overline{U}$ is a simplicial cone, and a fundamental domain or \emph{chamber} for the Coxeter group $W$, i.e., each $W$-orbit meets $\overline{U}$ in exactly one point. $\overline{U}$ is also the leaf space of $\mathcal{\overline{F}}$, i.e., $\overline{U}=\mathbb{R}^{n+1}/\mathcal{\overline{F}}=(x + V_{x}M)/W$. Let $q\in x + V_{x}M$. Then $q$ is $W$-regular if and only if $q$ is nonfocal, i.e., $q$ is not in any $l_{i}(x)$.

Next we consider a polar or isoparametric foliation $\mathcal{F}$ of a unit sphere $\mathbb{S}^{n}$. Since $\mathcal{F}$ is an isoparametric foliation of $\mathbb{S}^{n}$, the foliation $\mathcal{\overline{F}}$ of $\mathbb{R}^{n+1}$ whose leaves are homothetic to leaves of $\mathcal{F}$ is also isoparametric. Thus $\mathcal{F}=\mathcal{\overline{F}}|_{\mathbb{S}^{n}}$ is the restriction of $\mathcal{\overline{F}}$ in $\mathbb{S}^{n}$. Let $x\in \mathbb{S}^{n}$ be a regular point of $\mathcal{F}$ and set $M:=L(x)$. The sections passing $x$ in $\mathcal{\overline{F}}$ and $\mathcal{F}$ are $\mathbb{R}^{k+1}_{x}:=(x + V_{x}M)$ and $\mathbb{S}^{k}_{x}:=\mathbb{R}^{k+1}_{x}\cap \mathbb{S}^{n}$ respectively, where $k\geq1$ is the codimension of $M$ in $\mathbb{S}^{n}$.  As the tangent bundle of an isoparametric submanifold $M$ of both $\mathbb{S}^{n}$ and $\mathbb{R}^{n+1}$, $TM$ has the same canonical splitting $TM=E_{1}\oplus \cdots \oplus E_{g}$ as the orthogonal direct sum of $g$ curvature distributions $E_{i}$ in both cases.

Set $\overline{l}_{i}(x)=l_{i}(x)\cap \mathbb{S}^{k}_{x}$ for $1\leq i\leq g$. As the focal set of $M$ in $\mathcal{\overline{F}}$ intersects the section $\mathbb{R}^{k+1}_{x}$ in the union of $l_{1}(x), \cdots, l_{g}(x)$, the focal set of $M$ in $\mathcal{F}$ intersects the section $\mathbb{S}^{k}_{x}$ in the union of $\overline{l}_{1}(x), \cdots, \overline{l}_{g}(x)$. If $y$ is not the origin $o$, and $y\in l_{i}(x)$ is a focal point of $M$ in $\mathcal{\overline{F}}$ corresponding to the curvature distributions $E_{i}$, then $\overline{y}\in \overline{l}_{i}(x)$ is a focal point of $M$ in $\mathcal{F}$ corresponding to the curvature distributions $E_{i}$, where $\overline{y}-o=(y-o)/\parallel y-o\parallel$.

As those reflections of $\mathbb{R}^{k+1}_{x}$ in $l_{1}(x),\cdots , l_{g}(x)$ generate a finite Coxeter group $W$ in $\mathcal{\overline{F}}$, these reflections of $\mathbb{S}^{k}_{x}$ in $\overline{l}_{1}(x),\cdots , \overline{l}_{g}(x)$ generate the same Coxeter group $W$ in $\mathcal{F}$. As $\overline{U}=\mathbb{R}^{n+1}/\mathcal{\overline{F}}=\mathbb{R}^{k+1}_{x}/W$ is both the leaf space of $\mathcal{\overline{F}}$ and a chamber on $\mathbb{R}^{k+1}_{x}$ for the Coxeter group $W$, $\overline{U}\cap \mathbb{S}^{n}=\mathbb{S}^{n}/\mathcal{F}=\mathbb{S}^{k}_{x}/W$ is both the leaf space of $\mathcal{F}$ and a chamber on $\mathbb{S}^{k}_{x}$ for the Coxeter group $W$.

By theories of $\mathcal{F}$ and $\mathcal{\overline{F}}$ above we are ready to prove Theorem \ref{1.1}.\\

\noindent {\it Proof of Theorem \ref{1.1}.} Let $\mathcal{F}$ be a polar foliation of $\mathbb{S}^{n}$. For any $p, q\in \mathbb{S}^{n}$, we will show that $d_{\mathcal{H}}(p, q)\leq \pi$. Since the union of all regular leaves are open and dense in $\mathbb{S}^{n}$, we can assume that both $L(p)$ and $L(q)$ are regular leaves. We set $M:=L(q)$. Let $\Sigma$ denote the section of $\mathcal F$ through $p$ and let $q_{0}$ denote the point in $M\cap\Sigma$ farthest away from $p$. Choose a (unit speed) minimal horizontal geodesic $\gamma(\theta)$ from $q_{0}:=\gamma(0)$ to $p:=\gamma(\theta_{0})$, and assume that $\gamma'(0)=\xi$. Extend $\xi$ to a parallel unit normal vector field $\xi$ of $M$.

Since $\mathcal{F}$ is an isoparametric foliation of $\mathbb{S}^{n}$, the foliation $\mathcal{\overline{F}}$ of $\mathbb{R}^{n+1}$ whose leaves are homothetic to leaves of $\mathcal{F}$ is also isoparametric. Thus $M$ is an isoparametric submanifold of both $\mathbb{S}^{n}$ and $\mathbb{R}^{n+1}$, and $TM$ has the same canonical splitting $TM=E_{1}\oplus \cdots \oplus E_{g}$ as the orthogonal direct sum of $g$ curvature distributions $E_{i}$ in both cases. For $x\in M$ and $1\leq i\leq g$, the leaf $S_{i}(x)$ of $E_{i}$ through $x$ is a $m_{i}$-dimensional standard sphere.

We can assume that there are precisely $g$ focal points $\gamma(\theta_{1}), \cdots, \gamma(\theta_{g})$ of $M$ along $\gamma$ after moving $p$ slightly in $\Sigma$ if necessary, where $\gamma(\theta_{i})\in l_{i}(q_{0})\cap \Sigma$ and $\theta_{g}< \theta_{g-1}< \cdots < \theta_{1}$. Clearly, $\theta_{1}< \theta_{0}\leq \pi$. By Theorem 5.2 in \cite{hpt} and theories of $\mathcal{F}$ and $\mathcal{\overline{F}}$, it is not hard to prove that $(Z, f)$ is an $\mathbb{Z}_{2}$-orientiable Bott-Samelson cycle at $q_{0}$, where $Z=\{(y_{1}, \cdots,y_{g})\mid y_{1}\in S_{1}(q_{0}), y_{2}\in S_{2}(y_{1}),\cdots, y_{g}\in S_{g}(y_{g-1}) \}$ is an iterated sphere bundle of dimension $m:=\dim M$, and $f: Z\rightarrow M$ is defined by $f(y_{1}, \cdots,y_{g})=y_{g}$. Since both $M$ and $Z$ are compact manifolds of dimension $m$, by Theorem 4.20 in \cite{hpt} $(Z,f)$ represents the nontrivial homology class in $H_m(M,\mathbb Z_2)$. It is well known that such a map $f$ must be surjective, since $M$ is compact. For $x\in M$ and $1\leq i\leq g-1$, we set $S_{i+1}\circ S_{i}\circ \cdots \circ S_{1}(x):=\{x_{i+1}|x_{i+1}\in S_{i+1}(x_{i}), x_{i}\in S_{i}\circ \cdots \circ S_{1}(x)\}$. Thus $$M=S_{g}\circ S_{g-1}\circ \cdots \circ S_{1}(q_{0}). \eqno{(1)}$$

We consider the parallel leaf $f_{\theta}: M\rightarrow L(\gamma(\theta))$ in $\mathbb{S}^{n}$ defined by $$f_{\theta}(x)=\cos \theta\ x + \sin \theta\ \xi (x).$$ Then for any $x\in M$ and $1\leq i\leq g$, $f_{\theta_{i}}(x)$ is a focal point of $M$ since we have $$f_{\theta_{i}}(S_{i}(x))=f_{\theta_{i}}(x). \eqno{(2)}$$

Consider now the family $\mathcal{B}$ of broken horizontal geodesics in $\mathbb{S}^{n}$ from $p=\gamma(\theta_{0})$ to $M=L(\gamma(0))$, whose projection (to the leaf space $\mathbb{S}^{n}/\mathcal{F}$) is the same as $\gamma$, that are allowed to change directions at the singular leaves. For $0\leq \theta\leq\theta_{0}$, define $\mathcal{B}(\theta):=\{c(\theta)\mid c\in \mathcal{B}\}$, which is a subset of $L(\gamma(\theta))$. Notice that we will consider $\mathcal{B}(\theta)$ in the direction from $\theta=\theta_{0}$ to $\theta=0$.
Then $$\mathcal{B}(\theta)=f_{\theta}(q_{0})=\gamma(\theta)\ \ for\ \ \theta\in (\theta_{1}, \theta_{0}].$$ By (2) we get that $f_{\theta_{1}}(q_{0})=f_{\theta_{1}}(S_{1}(q_{0}))$, thus
$$\mathcal{B}(\theta)=f_{\theta}(S_{1}(q_{0}))\ \ for\ \ \theta\in (\theta_{2}, \theta_{1}].$$
Hence, after $g$ steps, we get that $$\mathcal{B}(\theta)=f_{\theta}(S_{g}\circ S_{g-1}\circ \cdots \circ S_{1}(q_{0}))\ \ for\ \ 0\leq\theta\leq \theta_{g}.\eqno{(3)}$$

By (1), (3) we have $$\mathcal{B}(\theta)=f_{\theta}(M)=L(\gamma(\theta))\ \ for\ \ 0\leq\theta\leq \theta_{g}.$$
It follows that $\mathcal{B}(0)=L(q_{0})=L(q)$, which means that $p$ and $q$ can be joined by a broken horizontal geodesic of length $\theta_{0}$. Thus $diam_{\mathcal{H}}\mathbb{S}^{n}=\pi$ since $d_{\mathcal{H}}(p, q)\leq \theta_{0}\leq \pi$. $\hfill \square$

\section{Reducible actions on $\mathbb{S}^{n}$ and applications}
\renewcommand{\thesection}{\arabic{section}}
\renewcommand{\theequation}{\thesection.\arabic{equation}}
\setcounter{equation}{0}
\setcounter{theorem}{0}
For a singular Riemannian foliation $\mathcal{F}$ with closed leaves on a unit sphere $\mathbb{S}^{n}$, let $f:\mathbb{S}^{n}\rightarrow X=\mathbb{S}^{n}/\mathcal{F}$ be the project map. The quotient space $X$ has an induced metric: for any $x, y\in X$, we define $d(x, y)=d(f^{-1}(x), f^{-1}(y))$. Let $diam X$ denote the diameter of $X$ defined by this induced metric. By using theories in \cite{cg} we can get the following Lemma, which has been pointed out by Chen and Grove \cite{cg} (page 786) for the case of group actions:
\begin{lemma}\label{reducible}
Let $\mathcal{F}$ be a singular Riemannian foliation with closed leaves on a unit sphere $\mathbb{S}^{n}$ and $f:\mathbb{S}^{n}\rightarrow X=\mathbb{S}^{n}/\mathcal{F}$ be the project map, then $diam X\geq\frac{\pi}{2}$ (or equivalently $diam X=\frac{\pi}{2}$ or $\pi$) if and only if there is an integer $0\leq k\leq n-1$ and a unit sphere $\mathbb{S}^{k}\subset \mathbb{S}^{n}$ saturated by the leaves of $\mathcal{F}$, i.e., with $\mathbb{S}^{k}=f^{-1}(f(\mathbb{S}^{k}))$. In particularly, if $\mathcal{F}$ is given by the orbits of an isometric action of a compact Lie group $G$, then $diam X\geq\frac{\pi}{2}$ if and only if the action of $G$ is reducible.
\end{lemma}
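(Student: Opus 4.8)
The plan is to prove the stated equivalence together with its parenthetical strengthening by establishing three implications: (i) if some great subsphere $\mathbb{S}^k\subset\mathbb{S}^n$ with $0\le k\le n-1$ is saturated, then $\operatorname{diam}X\ge\frac{\pi}{2}$; (ii) if $\operatorname{diam}X\ge\frac{\pi}{2}$, then such a saturated $\mathbb{S}^k$ exists; (iii) if such a saturated $\mathbb{S}^k$ exists, then $\operatorname{diam}X\in\{\frac{\pi}{2},\pi\}$. For a linear subspace $V\subset\mathbb{R}^{n+1}$ write $\mathbb{S}(V):=V\cap\mathbb{S}^n$. I will use two elementary facts: for $x\in\mathbb{S}^n\subset\mathbb{R}^{n+1}$ and a leaf $L$ one has $d(x,L)\ge\frac{\pi}{2}$ exactly when $\langle x,b\rangle\le0$ for all $b\in L$ (so in particular $d(x,L)=\frac{\pi}{2}$ once $\langle x,b\rangle=0$ for all $b\in L$); and, since $\mathcal{F}$ has closed leaves, $x\mapsto d(x,A)$ is constant along the leaves of $\mathcal{F}$ for every saturated set $A$, so its superlevel sets are saturated. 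I also use that $X$ is compact, and recall from \cite{cg} that $X$ is an Alexandrov space of curvature $\ge1$ with $\operatorname{diam}X\le\pi$.

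For (i), put $V=\operatorname{span}\mathbb{S}^k\subsetneq\mathbb{R}^{n+1}$; any unit $v\in V^{\perp}$ satisfies $d(v,\mathbb{S}^k)=\frac{\pi}{2}$, hence $d(L(v),\mathbb{S}^k)=\frac{\pi}{2}$ because this distance is constant along leaves, so $d_X(f(v),f(u))\ge\frac{\pi}{2}$ for every $u\in\mathbb{S}^k$ and $\operatorname{diam}X\ge\frac{\pi}{2}$. For (iii), a saturated $\mathbb{S}^k$ forces $\mathcal{F}$ to split as a spherical join $\mathcal{F}=\mathcal{F}_1\ast\mathcal{F}_2$ of closed-leaf singular Riemannian foliations on $\mathbb{S}^k$ and on the complementary subsphere $\mathbb{S}^{n-k-1}$ — immediate for group actions, the orthogonal complement of an invariant subspace being invariant, and part of the structure theory used in \cite{cg} in general — so $X=X_1\ast X_2$ with both factors nonempty; the identity $\operatorname{diam}(X_1\ast X_2)=\max\{\frac{\pi}{2},\operatorname{diam}X_1,\operatorname{diam}X_2\}$ is a short spherical computation, and induction on $n$ (each $X_i$ being again a quotient of a sphere of dimension $<n$, so $\operatorname{diam}X_i\in\{\frac{\pi}{2},\pi\}$ whenever $\operatorname{diam}X_i\ge\frac{\pi}{2}$) then gives $\operatorname{diam}X\in\{\frac{\pi}{2},\pi\}$.

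The heart of the matter is (ii). Assuming $\operatorname{diam}X\ge\frac{\pi}{2}$, compactness produces leaves $L_p,L_q$ with $d(L_p,L_q)\ge\frac{\pi}{2}$, so $\langle a,b\rangle\le0$ for all $a\in L_p$, $b\in L_q$; then $C:=\{x\in\mathbb{S}^n:d(x,L_q)\ge\frac{\pi}{2}\}=\{x\in\mathbb{S}^n:\langle x,b\rangle\le0\ \text{for all }b\in L_q\}$ is a nonempty, proper, spherically convex, saturated subset of $\mathbb{S}^n$ containing $L_p$, and the task is to extract from it a saturated great subsphere of dimension $\le n-1$. For orbit foliations this is the observation of Chen and Grove: the span $V_q$ of $Gq$ is $G$-invariant, so if $V_q\ne\mathbb{R}^{n+1}$ then $\mathbb{S}(V_q)$ is the required subsphere; while if $V_q=\mathbb{R}^{n+1}$ then the convex cone over $C$ is the polar of the full-dimensional $G$-invariant cone generated by $Gq$ and hence pointed (contains no line), so the $G$-barycenter $\bar p$ of $Gp$ — which lies in $\operatorname{conv}(Gp)$, hence in that polar cone — cannot vanish (were $\bar p=0$, pointedness would force $0\in Gp$), and $\bar p$ is a nonzero $G$-fixed vector, whence $\mathbb{S}(\mathbb{R}\bar p)=\mathbb{S}^0$ is saturated. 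For a general closed-leaf $\mathcal{F}$ I would run the same scheme in the leaf space, replacing ``$G$-invariant subspace'' by ``subspace with saturated unit sphere'' (these are closed under intersection, so there is a smallest one containing $L_q$) and the barycenter/averaging step by the convexity and dual-foliation results of \cite{cg}; the hard part will be exactly this last step — guaranteeing that the linear object extracted from $C$ is genuinely saturated by $\mathcal{F}$ rather than merely invariant under a group — and it is precisely where the ``theories in \cite{cg}'' are needed, the remainder being elementary spherical and convex geometry. Finally the ``in particular'' clause is formal: the $G$-action is reducible iff $\mathbb{R}^{n+1}$ has a proper nonzero $G$-invariant subspace $V$, and $V\mapsto\mathbb{S}(V)$, $\mathbb{S}^k\mapsto\operatorname{span}\mathbb{S}^k$ are mutually inverse bijections between such subspaces and saturated great subspheres of dimension $\le n-1$, so by the equivalence just proved reducibility of the $G$-action is equivalent to $\operatorname{diam}X\ge\frac{\pi}{2}$.
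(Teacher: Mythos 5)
Your step (i) is essentially the paper's ``if'' direction, and the ``in particular'' reduction at the end is the same. The trouble is in (ii) and (iii).

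In (iii) you claim that a saturated great subsphere $\mathbb{S}^{k}\subset\mathbb{S}^{n}$ forces $\mathcal{F}$ to split as a spherical join $\mathcal{F}_{1}\ast\mathcal{F}_{2}$ (``immediate for group actions''). This is false even in the homogeneous case. Take the Hopf action of $U(1)$ on $\mathbb{S}^{3}\subset\mathbb{C}^{2}$: the great circle $\mathbb{S}^{1}\times\{0\}$ is a single orbit, hence saturated, and its orthogonal complement is invariant, but the orbit foliation is not the join of the two trivial (one-leaf) foliations on the two great circles --- that join has quotient $[0,\pi/2]$, while the Hopf quotient is $\mathbb{S}^{2}(1/2)$. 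So the existence of an invariant complementary subspace does not give a join decomposition of the foliation (or of the quotient), and your induction in (iii) does not get off the ground. The paper instead obtains the dichotomy $\operatorname{diam}X\in\{\pi/2,\pi\}$ as a black box from Theorem~2.2 of \cite{sx}, which is a statement about Alexandrov quotients of curvature $\ge 1$ and does not pass through any join structure.

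In (ii) you give a clean convex-geometry argument (span of $Gq$, plus a barycenter/pointed-cone argument when that span is everything) that works for group actions; this reproduces the Chen--Grove observation. But you then say explicitly that for a general closed-leaf singular Riemannian foliation you ``would run the same scheme'' and that ``the hard part will be exactly this last step.'' That last step is the entire content of the lemma in the non-homogeneous case: there is no group to average over, ``invariant subspace'' has no a priori meaning, and nothing in your argument produces a subspace whose unit sphere is actually a union of leaves. The paper fills this hole differently: it splits into the cases $\operatorname{diam}X=\pi$ and $\operatorname{diam}X=\pi/2$ and cites Proposition~4 of \cite{cg} (two leaves at distance $\pi$ must both be point leaves, giving a saturated $\mathbb{S}^{0}$) and Theorem~2.4 of \cite{sx} (the $\pi/2$ case produces the saturated subsphere directly). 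So the paper's ``only if'' is a case analysis leaning on two prior results, whereas yours is a single convexity argument that is only complete for group actions. To make your route work in general you would need to supply, for arbitrary closed-leaf SRFs, an analogue of ``the smallest linear subspace whose unit sphere is saturated and contains a given leaf,'' together with a replacement for the barycenter step --- precisely the two things you flag as missing.
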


\begin{proof} In Theorem 2.2 in \cite{sx} we proved that if $diam X\geq\frac{\pi}{2}$, then $diam X=\frac{\pi}{2}$ or $\pi$. Now we prove the `only if'
part. Suppose $x, y\in X$ such that $d(x,y)=diam X=\pi$, then by Proposition 4 in \cite{cg} $f^{-1}(x)=p$ and $f^{-1}(y)=q$ are point leaves in $\mathbb{S}^{n}$. Since $d(p, q)=d(x, y)=\pi$, $q$ is the antipodal point of $p$. Thus $\mathbb{S}^{0}=\{p, q\}$ is a unit sphere saturated by leaves. If $diam X=\frac{\pi}{2}$, then by Theorem 2.4 in \cite{sx} there is a unit sphere $\mathbb{S}^{k}$ saturated by leaves, where $0< k\leq n-1$.

Next we prove the `if' part. For $\mathbb{S}^{k}\subset \mathbb{S}^{n}$, there is a subsphere $\mathbb{S}^{n-k-1}\subset \mathbb{S}^{n}$ such that $\mathbb{S}^{n}$ is the spherical join of $\mathbb{S}^{k}$ and $\mathbb{S}^{n-k-1}$, so we have $d(\mathbb{S}^{k},\mathbb{S}^{n-k-1})\equiv \frac{\pi}{2}$, that is, $d(p,q)=\frac{\pi}{2}$ for any $p\in \mathbb{S}^{k}$ and $q\in \mathbb{S}^{n-k-1}$. For any $p\in \mathbb{S}^{k}$, $L(p)\subset \mathbb{S}^{k}$ since $\mathbb{S}^{k}$ is saturated by leaves. Now for any $q\in \mathbb{S}^{n-k-1}$, $d(f(p), f(q))=d(L(p), q)=\frac{\pi}{2}$. Thus $diam X\geq d(f(p), f(q))=\frac{\pi}{2}$.
\end{proof}

\noindent {\it Proof of Theorem \ref{cohom 2}.} If $G$ has cohomogeneity 1, then the action of $G$ is polar. Next assume that $G$ has cohomogeneity 2 and let $\dim G$ denote the dimension of $G$. If $\dim G=1$, then the action of $G$ is reducible since $G$ is a commutative group. Now assume $\dim G>1$ and the action of $G$ is not polar. By Theorem 5.1 in \cite{st} there is a 1-dimensional linear group $K$ such that $\mathbb{S}^{n}/G=\mathbb{S}^{3}/K$. Now by Lemma \ref{reducible} $diam (\mathbb{S}^{n}/G)=diam (\mathbb{S}^{3}/K)\geq\frac{\pi}{2}$ since the action of $K$ is reducible. Using Lemma \ref{reducible} again we get that the action of $G$ is reducible.
$\hfill \square$\\

\noindent {\it Proof of Theorem \ref{infinitesimally polar}.} Our proof is an application of Theorem \ref{GL}. By Theorem \ref{cohom 2} and Lemma \ref{reducible} any action belonged to case (1), (3) and (4) in Theorem \ref{GL} is either polar or reducible. Meanwhile, any action belonged to case (2) in Theorem \ref{GL} gives rise to a Riemannian foliation. For a singular Riemannian foliation $\mathcal{F}$ with closed leaves on $\mathbb{S}^{n}$, we showed in \cite{sx} that if $\mathcal{F}$ is a Riemannian foliation or if $\mathcal{F}$ is given by the orbits of a reducible action, then $diam_{\mathcal{H}}\mathbb{S}^{n}=\pi$.  Thus combining Theorem \ref{1.1} with above arguments, we get the proof of Theorem \ref{infinitesimally polar}.
$\hfill \square$

 \end{document}